\providecommand{\U}[1]{\protect\rule{.1in}{.1in}}
\providecommand{\U}[1]{\protect\rule{.1in}{.1in}}
\newtheorem{lemma}{Lemma}
\newcommand{\C}{\mathbb C}
\newcommand{\R}{\mathbb R}
\newcommand{\Z}{\mathbb Z}
\DeclareMathOperator{\area}{area}
\DeclareMathOperator{\diag}{diag}
\DeclareMathOperator{\GL}{GL}
\DeclareMathOperator{\SL}{SL}
\DeclareMathOperator{\SO}{SO}
\DeclareMathOperator{\sgn}{sgn}
\DeclareMathOperator{\vol}{vol}
\begin{document}
\title{About the value of the two dimensional Levy's constant}
\author{Yitwah Cheung$^1$ \and Nicolas Chevallier$^2$}
\date{%
    $^1$Yau Mathematical Sciences Center, Tsinghua University\\%
    $^2$Département de Mathématiques, Université de Haute Alsace\\%
    \bigskip
    \today
}
\maketitle

\begin{abstract}
    We give a numerical approximation of the Lévy constant on the growth of the denominators of the best Diophantine approximations in dimension 2 with respect to the euclidean norm.
This constant is expressed as an integral on a surface of dimension 7. We reduce the computation of this integral to a triple integral, whose numerical evaluation was carried out in \cite{Xieu}.  
\end{abstract}

\section{Introduction}

In 1936, Aleksandr Khintchin showed that there exists a constant $K$ such that the denominators $(q_{n})_{n\geq 0}$
of the convergents of the continued fraction expansions of almost all real
numbers $\theta$ satisfy
\[
\lim_{n\rightarrow\infty}\frac1n\ln q_{n}=K
\]
 Soon afterward, Paul L\'{e}vy gave the explicit value
of the constant,
\[
K=\frac{\pi^{2}}{12\ln 2}.
\]
In \cite{CheChe}, this result is extended to the denominators of best Diophantine approximations to vectors in $\R^d$ and even to matrices in $M_{d,c}(\R)$. The value of the limit is given by an integral $\int_Sd\mu_S$ over a codimension one submanifold $S$ in the space of lattices $\SL(d+1,\R)/\SL(d+1,\Z)$ (see Section~\ref{S:main:formula} below). However, apart from the case of $d=1$, this integral is very difficult to calculate. The aim of this document is to give a numerical 
approximation of the integral associated with best Diophantine approximations to  vectors in $\R^2$.  

This document is organized as follows. We first give the definition of the submanifold $S$ together with a parametrization of $S$. Then we give an explicit formula for the measure $\mu_S$ induced by the flow. The two difficult parts of the work are the explicit description of the domain of integration, i.e., the subset of parameters corresponding to $S$, and the calculation of the integral $\int_Sd\mu_S$. This is done in the two last sections.   
For more details on best Diophantine approximation we refer the reader to \cite{CheChe}.

\section{Definitions of $S$ and its parametrization}

\subsection{Definition}
The surface $S$ is the set of unimodular lattices $\Lambda$ in $\R^3$ such
that there exist two independent vectors $u=(u_1,u_2,u_3)$ and $v=(v_1,v_2,v_3)$ in
$\Lambda$ such that:

\begin{itemize}
\item $|u_3|$ and $\sqrt{v_1^2+v_2^2}$ are $<r=|v_3|=\sqrt{u_1^2+u_2^2}$,

\item the only nonzero points of $\Lambda$ in the cylinder $C(r)=\{(x_1,x_2,x_3)\in\R^3:\max(\sqrt{x_1^2+x_2^2},|x_3|)\leq r\}$ are $\pm u$ and $\pm v$.
\end{itemize}

Let $\mu_G$ be the Haar measure in $G=\SL(3,\R)$. Let $\mu$ be the measure in the space  of unimodular lattices $\SL(3,\R)/\SL(3,\Z)$ invariant by the left action of $G$, induced by $\mu_G$. In turn, let $\mu_S$ be the measure induced by $\mu$ and the diagonal flow
\[
g_t=\operatorname{diag}(e^t,e^t,e^{-2t}),\, t\in\R
\]
on $S$.

\subsection{The main formula}\label{S:main:formula}
In \cite{CheChe}, it is proved that for Lebesgue-almost all $\theta\in\R^2$,
\[
\lim_{n\rightarrow\infty}\frac1n\ln q_n(\theta)=\frac{2\mu(\SL(3,\R)/\SL(3,\Z))}{\mu_S(S)}
\]
where $(q_n(\theta))_n$ is the sequence of best approximation denominators of $\theta$ associated with the standard Euclidean norm in $\R^2$.

\subsection{Parametrization of $S$}
Let $\Lambda$ be a lattice in $S$ and let $u=(u_1,u_2,u_3)$ and $v=(v_1,v_2,v_3)$ be the two vectors associated with $\Lambda$ by the definition of $S$. By Lemma 10 of \cite{CheChe}, there exists a vector $w\in \Lambda$ such that $u,v,w$ is a basis of $\Lambda$. We can suppose $u_3$ and $v_3\geq 0$ w.l.g.. 
There is a rotation $k_{\theta}$ in $\SO(3,\R)$ that fixes $e_3$ and such that $rk_{\theta}e_1=(u_1,u_2,0)$ where $r=\sqrt{u_1^2+u_2^2}$. If $M$ is the $3\times 3$ matrix whose columns are $u,v$ and $w$, we have
\begin{align*}
    \Lambda&=M\Z^3,\\
    M&=rk_{\theta}\begin{pmatrix}
    1&a_1&c_1\\
    0&a_2&c_2\\
    b&1&c_3
    \end{pmatrix}
\end{align*}
where $b\in[0,1[$, $a_1^2+a_2^2<1$ and 
\[
\det M=r^3((1-a_1b)(-c_2) - a_2(bc_1-c_3))=1.
\]
Therefore, we can parametrize $S$ with the seven parameters
\[
\theta,b,a_1,a_2,c_1,c_2,c_3.
\]
The problem is now to find a subset of parameters $\Omega_7$ such that
\begin{itemize}
    \item $\pm u$ and $\pm v$ are the only nonzero vector of $\Lambda$ in the cylinder $C(r)$,
    \item for every $\Lambda\in S$ there exists exactly one $7$-tuple $(\theta,b,a_1,a_2,c_1,c_2,c_3)$ such that $\Lambda=M\Z^3$,
\end{itemize} see the section about the Domain of integration.

\section{Induced measure on $S$}
We use Siegel normalization of the Haar measure on $G=\SL(3,\R)$, see \cite{Siegel} Lecture XV. For a Borel set $B\subset \SL(3,\R)$,
\[
\mu_G(B)=\operatorname{Lebesgue}_{\R^9}(B')
\]
where $B'=\{tM:M\in B,\,t\in[0,1]\}$.
Consider the parametrization of $\GL_+(3,\R)$ given by 
  $$M=g_trk_{\theta}\begin{pmatrix}
    1&a_1&c_1\\
    0&a_2&c_2\\
    b&1&c_3
    \end{pmatrix}.$$  
In these coordinates the standard volume form in $\R^9$ 
is\footnote{This was obtained by computing the determinant of a 9-by-9 matrix.  The factor $r^8$ is expected as $r$ is homogeneous of degree one. For $g_t=\exp(\diag(\lambda_1t,\lambda_2t,\lambda_3t))$ the leading coefficient generalizes to $\lambda_1-\lambda_3.$}
  $$3r^8d(r,t,\theta,a_1,a_2,b,c_1,c_2,c_3).$$

We wish to replace $r$ with the homogeneous coordinate $\rho:=\Delta^{1/3}$ where 
    $$\Delta =\operatorname{det}M= r^3\eta, \quad\text{ and }\quad 
    \eta := (1-a_1b)(-c_2) - a_2(bc_1-c_3)>0.$$ 
Rewrite Lebesgue volume form in the new coordinates as 
  $$\frac{\Delta^2d(\Delta,t,\theta,a_1,a_2,b,c_1,c_2,c_3)}
  {\eta^3} = 
  \frac{3\rho^8d(\rho,t,\theta,a_1,a_2,b,c_1,c_2,c_3)}
  {\eta^3}.$$
The local form of Haar measure with Siegel's normalization is
  $$d\mu_G=\int_0^1\frac{3\rho^8d(t,\theta,a_1,a_2,b,c_1,c_2,c_3)}
  {\eta^3}d\rho=\frac{d(t,\theta,a_1,a_2,b,c_1,c_2,c_3)}
             {3((1-a_1b)(-c_2) - a_2(bc_1-c_3))^3}.$$
Using this normalization, we can quote Siegel's formula and bypass the computation of the first return time:
  $$\vol(\SL(3,\R)/\SL(3,\Z)) = \frac{\zeta(2)\zeta(3)}{3}.$$  
The local form for the induced measure on the transversal $S$ is now 
    $$d\mu_S = \frac{d(\theta,a_1,a_2,b,c_1,c_2,c_3)}
             {3((1-a_1b)(-c_2) - a_2(bc_1-c_3))^3}.$$
Levy's constant is $d=2$ times the average return time (see the main formula), or 
  $$K= L_{2,1}=\frac{2\zeta(2)\zeta(3)}{3\mu_S(\Omega_7)}$$  
where $\Omega_7$ is any domain of integration parametrizing the transversal $S$.  

\section{Domain of integration}
The suspension of the transversal gives a fundamental domain for the action of $\SL(3,\Z)$ that is invariant under is $\SO(2)$ rather than $\SO(3)$.  The quotient of the 
transversal by the circle action is a 6 dimensional space that we shall realize as a fiber bundle over the 3 dimensional base $\Omega_2\times[0,1[$ where $$\Omega_2 = \{a=a_1+ia_2\in\C: |a|<1, |a-1|\ge1\}$$ 
parametrizes the possible configurations of $v=(a_1,a_2,1)$ and $u=(1,0,b)$ on the boundary of the standard unit cylinder $C_0=C(1)$.  Given a lattice in our transversal $S$ we rescale and rotate so that the systole cylinder is given by $C_0$ with the vectors $u$ and $v$ on its $\partial_+$- and $\partial_-$-faces.  Ambiguity in the normal form can be ignored since it occurs on a set of positive codimension.  The lattice is determined by specifying the third vector $w=(c_1,c_2,c_3)$ that forms an integral basis for the lattice, positively oriented by requiring $c_2<0$.  We have some freedom for the choice of $w$, which we will describe next.  Once this choice is made, the projection from the 6 dimensional total space to the base will have been specified.

The space of possibilities for $w$ is a connected component of the complement of the union of $C_0$ with all its translates under the action of the discrete subgroup $\Z u+\Z v$.  The topological boundary is tesselated by the domain $G(a,b)$ where this surface meets $\partial C_0$.  The orthogonal projection of $G(a,b)$ onto the $(c_1,c_3)$-plane is a rectilinear domain that we will denote by $$F(a,b).$$
The choice of $w$ is such that its orthogonal projection lies in $F(a,b)$;
that is, $$(c_1,c_3)\in F(a,b) \quad\text{ and }\quad -c_2\ge\sqrt{1-c_1^2}.$$
$F(a,b)$ is rectilinear since curved arcs on $\partial G(a,b)$ map to horizontal segments.

\subsection{Intersection patterns}
For the precise description of $F(a,b)$ we need to know which cylinders appear on $\partial G(a,b)$.  
By symmetry, it suffices to consider the case $a_2>0$.
\begin{lemma}\label{lem:cylinder_pairs}
$C_{\pm u}, C_{\pm v}, C_{\pm(u+v)}$ and $C_{\pm(v-u)}$ are the only translates whose intersection 
with $C_0$ has nonempty interior, unless $|2a-1|<2$, when 
there is an additional pair $C_{\pm(2v-u)}$.
\end{lemma}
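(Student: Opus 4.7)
The strategy is to rewrite the geometric overlap condition as a pair of arithmetic inequalities on the coefficients of $w$ in the basis $(u,v)$, and enumerate.  Parametrize a general nonzero lattice vector as $w=nu+mv$ with $(n,m)\in\Z^2\setminus\{(0,0)\}$, and identify the horizontal plane with $\C$ via $(x_1,x_2)\leftrightarrow x_1+ix_2$ so that the horizontal component of $u$ is $1$ and that of $v$ is $a$.  Since $C_0$ and $C_w$ are both closed cylinders of radius $1$ and half-height $1$, their interiors meet if and only if the horizontal distance $|n+ma|$ and the vertical distance $|nb+m|$ between their axes and centers respectively are both strictly less than $2$.  The plan is then to enumerate all $(n,m)\in\Z^2\setminus\{(0,0)\}$ satisfying both inequalities, using $0\le b<1$, the defining inequalities $|a|<1$ and $|a-1|\ge 1$ of $\Omega_2$, and the symmetry reduction $a_2>0$.

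The eight standard pairs $(\pm 1,0),(0,\pm 1),(\pm 1,\pm 1),(\pm 1,\mp 1)$ satisfy both inequalities by a direct triangle-inequality bound:  horizontal $\le |n|+|m|\,|a|<|n|+|m|\le 2$ and vertical $\le |n|\,b+|m|<|n|+|m|\le 2$, with strict inequality arising from $|a|<1$ or $b<1$.  These pairs are $\pm u,\pm v,\pm(u+v),\pm(v-u)$.  The exceptional pair $\pm(1,-2)=\mp(2v-u)$ has vertical distance $2-b<2$ (on the full-measure set $b>0$) and horizontal distance $|2a-1|$, and so contributes precisely when $|2a-1|<2$, matching the stated exception.

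For all remaining pairs, the task is to show that at least one of the two inequalities fails.  The easy eliminations are $(\pm 2,0)$ and $(0,\pm 2)$ (equality $=2$), $\pm(2,2)$ with matching signs (vertical $\ge 2$), $\pm(2,-2)$ (horizontal $=2|a-1|\ge 2$ by $\Omega_2$), and $\pm(1,2)$ with matching signs (vertical $\ge 2$).  For $|n|+|m|\ge 4$ not yet covered, the joint constraint combined with $|a|,b<1$ forces both $|n|\le|m|+2$ and $|m|\le|n|+2$, restricting to a short list, each ruled out by direct substitution using $|a-1|\ge 1$.

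The main obstacle is the remaining four pairs $\pm(2,\pm 1)$.  For these, the vertical inequality $|2b\pm 1|<2$ is satisfied on a full-measure subset of $b\in[0,1)$, and the horizontal inequality $|2\pm a|<2$ is equivalent to $\mp a_1>|a|^2/4$, which cuts out an open subregion of $\Omega_2$.  To close the argument one must combine these with the defining inequality $|a-1|\ge 1$ (which yields $a_1\le|a|^2/2$) and the symmetry $a_2>0$ in a refined way---possibly reformulating the horizontal inequality as a statement about the position of $2u\pm v$ relative to $C(r)$ and invoking the uniqueness of the shortest-vector pair---so as to derive the required exclusion.  This technical combination is the principal obstacle to a complete proof.
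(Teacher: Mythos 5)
Your reduction of the overlap condition to the two strict inequalities $|n+ma|<2$ (horizontal) and $|nb+m|<2$ (vertical), followed by enumeration of $(n,m)$, is exactly the paper's approach: its entire proof is the observation that the admissible pairs lie in the intersection of the strip $|bm+n|<2$ with an ellipse, with $2v-u$ the unique exception when $|2a-1|<2$. Your handling of the eight standard pairs, of $\pm(2v-u)$, and of the easy eliminations matches this and is correct. But the step you defer as ``the principal obstacle'' --- excluding $\pm(2u\pm v)$ by some refined combination of $|a-1|\ge1$ with the horizontal inequality --- does not exist, because the exclusion is \emph{false}. Take $a=\frac1{10}+\frac{i}{2}$: then $|a|^2=0.26<1$ and $|a-1|^2=1.06\ge1$, so $a\in\Omega_2^+$, yet $|2-a|^2=3.86<4$ and $|2b-1|<2$ for every $b\in[0,1)$, so $C_{\pm(2u-v)}\cap C_0$ has nonempty interior; this happens on the whole crescent $\{|a|^2/4<a_1\le|a|^2/2\}$ of $\Omega_2^+$ (the region between the circles $|a-2|=2$ and $|a-1|=1$, which are tangent at $0$). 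Likewise $C_{\pm(2u+v)}$ meets $C_0$ whenever $a_1<-|a|^2/4$, i.e.\ $|2+a|<2$ (e.g.\ $a=-\frac12+\frac{i}{10}$), together with $b<\frac12$. So the lemma as stated omits exceptional pairs, and the paper's own one-line proof is defective at the same spot: its claim that the ellipse is contained in the strip $|m|<2|a_1|/|a|$ fails at exactly these parameters, where $(2,-1)$ (coefficients of $(u,v)$) lies in the ellipse. Having isolated the four problematic pairs, the correct move was to test them numerically and produce the counterexample, rather than presume the statement provable; your suggested appeal to ``uniqueness of the shortest-vector pair'' cannot rescue it, since the lattices above genuinely belong to the transversal.

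Two smaller points. First, your assertion that $|2b+1|<2$ holds on a full-measure subset of $b\in[0,1)$ is a slip: it holds iff $b<\frac12$, which is only of positive measure (enough for the argument, but state it accurately). Second, the omission is plausibly harmless for the paper's end result: by the same midpoint device as in Lemma~\ref{lem:C(2v-u)} one expects $D_0\cap D_{2-a}\subset\mathrm{Int}\,(D_1\cup D_{1-a})$ and similarly for $2u+v$, so these extra cylinders would not contribute arcs to $\partial G(a,b)$; and the later verification that $F(a,b)$ is a fundamental domain of $\Z(1,b)+\Z(-a_1,1)$ independently certifies the explicit description of $F(a,b)$. The defect is thus in the statement and proof of the lemma itself, which should be amended to list $C_{\pm(2u-v)}$ for $|2-a|<2$ and $C_{\pm(2u+v)}$ for $|2+a|<2$, $b<\frac12$, together with a covering argument in the style of Lemma~\ref{lem:C(2v-u)}.
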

\begin{proof}
The hypothesis on $C_{mu+nv}$ is satisfied by $(m,n)$ in the intersection of the infinite strip $|bm+n|<2$ and an ellipse that is contained in the vertical strip $|m|<2|a_1|/|a|$.  
The pair $(2,-1)$ lies in the ellipse if and only if $|2a-1|<2$.
\end{proof}

\begin{lemma}\label{lem:C(2v-u)}
$C_{\pm(2v-u)}$ is disjoint from $\partial C_0\setminus U$ 
where $U$ is the union of the cylinders in Lemma~\ref{lem:cylinder_pairs}.
\end{lemma}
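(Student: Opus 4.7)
The plan is to establish the slightly stronger inclusion
$$C_0 \cap C_{\pm(2v-u)} \subset C_v \cup C_{v-u},$$
which immediately implies the lemma. The minus case is obtained from the plus case via $p\mapsto -p$, so it suffices to take $p\in C_0\cap C_{2v-u}$ and exhibit one of $C_v, C_{v-u}$ containing it.

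The vertical coordinate is handled quickly: the constraints $|p_3|\le 1$ and $|p_3-(2-b)|\le 1$ force $p_3\in[1-b,1]$, an interval contained in both $[0,2]$ and $[-b,2-b]$, which are the vertical extents of $C_v$ and $C_{v-u}$ respectively. Hence the vertical inclusion is automatic and the problem reduces to the planar claim that $D(0,1)\cap D(2a-1,1)\subset D(a,1)\cup D(a-1,1)$, where $D(c,\rho)$ denotes the closed disk of radius $\rho$ about $c\in\C$.

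The key observation for this planar claim is that $m:=a-\tfrac12$ is the common midpoint of both pairs $\{0,2a-1\}$ and $\{a,a-1\}$, so the median identity $|z-c|^2+|z-d|^2 = 2|z-\tfrac{c+d}{2}|^2+\tfrac12|c-d|^2$ gives
\begin{align*}
|z|^2+|z-(2a-1)|^2 &= 2|z-m|^2 + 2|a-\tfrac12|^2,\\
|z-a|^2+|z-(a-1)|^2 &= 2|z-m|^2 + \tfrac12.
\end{align*}
Setting $z=p_1+ip_2$, the first identity combined with $|z|\le 1$ and $|z-(2a-1)|\le 1$ yields $|z-m|^2\le 1-|a-\tfrac12|^2$. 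Arguing by contradiction, if both $|z-a|>1$ and $|z-(a-1)|>1$, the second identity forces $|z-m|^2>\tfrac34$, whence $|a-\tfrac12|<\tfrac12$. This contradicts the bound $|a-\tfrac12|\ge |a-1|-\tfrac12\ge\tfrac12$ coming from the standing condition $|a-1|\ge 1$ that defines $\Omega_2$.

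The heart of the proof is this alignment: the four relevant disk-centers $0,2a-1,a,a-1$ share the common midpoint $a-\tfrac12$, and the lattice-geometric hypothesis $|a-1|\ge 1$ is exactly what is needed to push $|a-\tfrac12|^2$ above $\tfrac14$ and close the inequalities. I do not expect the companion hypothesis $|2a-1|<2$ from Lemma~\ref{lem:cylinder_pairs} to play any role in the argument (if it fails, the planar intersection is already empty or degenerate), and I anticipate no further obstacles.
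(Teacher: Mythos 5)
Your proof is correct, and its first step --- reducing to the planar inclusion $D(0,1)\cap D(2a-1,1)\subset D(a,1)\cup D(a-1,1)$ --- is exactly the reduction the paper makes (the paper leaves the vertical bookkeeping implicit; you spell it out, correctly). Where you genuinely diverge is in how the planar inclusion is established. The paper shows the lens $D_0\cap D_{2a-1}$ is contained in the open disk of radius $\tfrac12$ centered at the common midpoint $a-\tfrac12$, which sits inside $D_a\cap D_{a-1}$; this is checked by bounding the distance from $a-\tfrac12$ to the lens corner $\chi_-(2a-1)$, and is tied to the condition $|2a-1|<2$. You instead apply the parallelogram law at the shared midpoint $m=a-\tfrac12$: membership in the lens gives $|z-m|^2\le 1-|a-\tfrac12|^2$, exclusion from both target disks gives $|z-m|^2>\tfrac34$, and the hypothesis $|a-1|\ge1$ (hence $|a-\tfrac12|\ge\tfrac12$) closes the gap. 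Your route is not only cleaner but arguably more robust: the distance from $a-\tfrac12$ to $\chi_-(2a-1)$ equals $\sqrt{1-|2a-1|^2/4}$, which is $<\tfrac12$ only when $|2a-1|>\sqrt3$, a condition that fails for instance at $a=\tfrac{3i}{10}\in\Omega_I^+$ (where $|2a-1|\approx1.17$); so the paper's intermediate containment in the radius-$\tfrac12$ disk does not hold throughout $\Omega_2^+$, even though the lemma itself does, whereas your argument covers all of $\Omega_2$ and, as you anticipated, never uses $|2a-1|<2$. One cosmetic difference: you land in the closed union $C_v\cup C_{v-u}$ rather than the paper's $\mathrm{Int}\,C_{v-u}\cup C_v$, but the closed version is all the stated lemma requires.
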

\begin{proof}
By symmetry, it suffices to show $C_{2v-u}\cap\partial C_0\subset \mathrm{Int~} C_{v-u}\cup C_v$, which in terms of euclidean disks is the condition $D_0\cap D_{2a-1} \subset \mathrm{Int~} D_a\cup D_{a-1}$.
For this, it is enough to verify that $D_0\cap D_{2a-1}$ is contained in the open disk of radius 1/2 centered at $a-1/2$, i.e. 
  $$ \left| \chi_-(2a-1) - a + \frac12 \right| < \frac12 $$
  which is equivalent to $|2a-1|<2$.  Here, the notation 
  $$\chi_\pm(a) := \frac{a}{2}\pm\frac{ia}{|a|}\sqrt{1-\frac{|a|^2}{4}}$$
 denotes the two points where the circles $|z|=1$ and $|z-a|=1$ meet.  
\end{proof}

To show that a cylinder meets $\partial G(a,b)$ in an arc involves 
checking that the arc is disjoint from the other 7 cylinders.  
There does not seem to be an easier way to carry out this tedious task, 
e.g. to verify that the cylinders $C_{\pm u}$ and $C_{\pm v}$ appear 
on $\partial G(a,b)$ in every possible scenario.\footnote{Later, we shall show that the projection $F(a,b)$ forms a fundamental domain for the action of $\Z (1,b)+\Z (-a_1,1)$, which indirectly shows that none of the arcs on $\partial F(a,b)$ can be blocked by the other cylinders.} 
We shall skip the verification of these intuitive claims and simply 
identify the criteria for the appearance of the other 4 cylinders.  It is perhaps surprising that the answer is always determined by the sign of $|a-\xi|-1$ where $\xi$ denotes the sixth root of unity in the first quadrant.
\begin{lemma}\label{lem:Omega:I}
$C_{\pm(u-v)}$ appear iff $|a-\xi|<1$ while 
$C_{\pm(u+v)}$ appear iff $|a-\xi|>1$.  
\end{lemma}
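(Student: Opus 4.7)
The strategy is to reduce the problem to the planar arrangement of shadow disks $D_\alpha$ (the projections of the cylinders $C_\alpha$ onto the $(x_1,x_2)$-plane) and to isolate $\xi=e^{i\pi/3}$ as the critical point. The algebraic engine is that $\xi$ satisfies $|\xi|=|\xi-1|=1$, so $\xi\in\partial D_0\cap\partial D_1$ unconditionally; hence $\xi\in D_a$ iff $|a-\xi|\le 1$, i.e.\ the three boundary circles $\partial D_0$, $\partial D_1$, $\partial D_a$ acquire a common point exactly at the transition $|a-\xi|=1$. A companion identity using $1-\bar\xi=\xi$ yields $|\bar\xi-(1-a)|=|\xi-a|$, so $\bar\xi\in D_{1-a}$ iff $|a-\xi|\le 1$ as well. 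These matched transitions, one near $\xi$ in the upper half-plane and one near $\bar\xi$ in the lower, drive both halves of the argument.

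First I would handle $C_{u+v}$, whose contribution to $\partial C_0$ lives on heights $[b,1]$ of the side and on the top face $\{x_3=1\}$. Since $a_2>0$ one has $\arg(1+a)\in(0,\arg a)$, so $D_{1+a}$ sits angularly between $D_1$ and $D_a$ on $\partial D_0$. When $|a-\xi|<1$, the arcs $\partial D_0\cap D_1$ and $\partial D_0\cap D_a$ overlap at $\xi$, the arc $\partial D_0\cap D_{1+a}$ is contained in their union, and a height-by-height check upgrades this to $\partial C_{u+v}\cap\partial C_0\subset C_u\cup C_v$, so $C_{u+v}$ is hidden. When $|a-\xi|>1$ the two arcs leave a genuine gap at $\xi$, which $D_{1+a}$ fills, giving a nonempty contribution of $\partial C_{u+v}$ to $\partial G(a,b)$.

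I would then run the parallel argument for $C_{u-v}$, whose relevant heights on $\partial C_0$ are $[-1,b]$ and whose competitors include $C_u$ on $[b-1,b]$ together with $C_{-u},C_{-v},C_{v-u},C_{-u-v}$ at lower heights. Here $\bar\xi$ plays the role of $\xi$: if $|a-\xi|<1$ then $\bar\xi\in D_{1-a}$, so $\partial D_0\cap D_{1-a}$ extends past $\bar\xi$ below the arc $\partial D_0\cap D_1$; a check against the other competing shadows shows that this excess is genuinely uncovered, whence $C_{u-v}$ appears. If $|a-\xi|>1$ then $\partial D_0\cap D_{1-a}\subset\partial D_0\cap D_1$, and a short verification using $C_{-v}$ at low heights and $C_u$ at higher heights shows $\partial C_{u-v}\cap\partial C_0$ is entirely covered. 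The $\pm$ versions of both cylinders then follow from the symmetry $\alpha\mapsto -\alpha$ of the ambient lattice.

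The main obstacle is precisely this 3D verification: shadow containment on $\partial D_0$ does not automatically lift to containment on $\partial C_0$, because the height ranges of $C_u$, $C_v$ and $C_{u\pm v}$ do not match. One has to track, at each height in $[-1,b]$ and $[b,1]$, which among the eight candidate cylinders of Lemma~\ref{lem:cylinder_pairs} is actually present and verify that their union swallows the arcs $\partial D_0\cap D_{1\mp a}$. Carrying out this case analysis --- and checking that the correct threshold is $|a-\xi|=1$ rather than the mirror $|a-\bar\xi|=1$ (which would be the criterion in the reflected half) --- constitutes the bulk of the technical work behind the clean statement of the lemma.
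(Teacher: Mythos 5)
Your reduction to the planar shadow disks and the identities $1-\bar\xi=\xi$, $|1-a-\bar\xi|=|a+\bar\xi-1|=|a-\xi|$ is essentially the paper's own argument: its proof states the same corner-point criteria (e.g.\ $\bar\xi\in D_{1-a}$ for $C_{u-v}$, and the crossing point $a+\bar\xi$ of $\partial D_a$ and $\partial D_{1+a}$ lying outside $D_1$ for $C_{u+v}$) and observes that each reduces to the sign of $|a-\xi|-1$. Like you, the paper explicitly skips the tedious height-by-height covering verification you flag as the main obstacle, noting in a footnote that it is justified indirectly by the later check that $F(a,b)$ is a fundamental domain for $\Z(1,b)+\Z(-a_1,1)$.
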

\begin{proof}
Considerations of elementary nature lead to the following characterizations: 
$C_{u-v}$ appears iff $d(1-a,\Bar\xi)<1$ while 
$C_{v-u}$ appears iff $d(a-1,-\xi)<1$; 
$C_{u+v}$ appears iff $d(a+\Bar\xi,1)>1$ while 
$C_{-u-v}$ appears iff $d(-a-\Bar\xi,-1)>1$.
In each case the sign of $|a-\xi|-1$ is nonzero.
\end{proof}

The shape of $F(a,b)$ also depends on how the cylinders intersect with $C_0$.
\begin{lemma}\label{lem:Omega:III}
$G(a,b)$ has overlap with the bottom of $C_0$ iff $|a+\xi|<1$.
\end{lemma}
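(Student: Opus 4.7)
The claim converts into a planar covering problem: $G(a,b)$ overlaps the bottom cap $B=\{x_3=-1\}\cap C_0$ in positive area iff the unit disk $D_0$ (the horizontal footprint of $B$) is not entirely covered by the horizontal cross-sections at $x_3=-1$ of the translates $C_{mu+nv}$ whose interiors reach the slab immediately below $B$. The plan is to enumerate these translates and reduce to a 2D question in $\mathbb C$.

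First I would enumerate the relevant translates. A translate contributes iff its vertical center $mb+n$ lies in $[-2,0)$. By Lemma~\ref{lem:cylinder_pairs}, up to the exceptional pair $\pm(2v-u)$, the translates with interior intersection reaching this slab are $-u,-v,-u-v,u-v$, projecting to unit disks in the complex plane centered at $-1,-a,-1-a,1-a$. The boundary translate $-2v$ (whose top cap coincides with $B$) is a tangential case which, by the argument used in Lemma~\ref{lem:C(2v-u)}, does not obstruct the arc of $\partial D_0$ governing the overlap.

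Next, I would show that these four disks fail to cover $D_0$ iff $|a+\xi|<1$. Following the template of Lemma~\ref{lem:Omega:I}, the transition is located by an elementary consideration: the corners of the uncovered region in $D_0$ are parametrised by the intersection points $\chi_\pm(\cdot)$ from the proof of Lemma~\ref{lem:C(2v-u)}, and a direct distance computation---using the identities $\xi+\bar\xi=1$, $\xi\bar\xi=1$, and $1-\bar\xi=\xi$---shows the uncovered region degenerates exactly when $|a+\xi|=1$. For $|a+\xi|<1$ the uncovered region is a nonempty open set near a specific sixth-root-of-unity point on $\partial D_0$ and yields $G(a,b)\cap B$ of positive area; for $|a+\xi|\ge 1$ the four disks close around $D_0$ and no overlap remains.

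The main obstacle mirrors that of Lemma~\ref{lem:Omega:I}: the somewhat tedious verification that each of the four cylinder arcs really appears on $\partial G(a,b)\cap B$ without being pre-empted by another translate---in particular by the exceptional $\pm(2v-u)$ when it appears, and by $-2v$ at $|x_3|=1$---together with the precise algebraic identification of the transition as $|a+\xi|=1$ rather than some neighbouring distance condition. Once these two points are pinned down the remainder is a monotonicity argument in the parameter $a$.
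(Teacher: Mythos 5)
Your reduction fails at its very first step. You assert that $G(a,b)$ overlaps the bottom cap iff the disk $D_0$ is \emph{not} covered by the cross-sections at $x_3=-1$ of the translates reaching that level; but non-coverage holds throughout $\Omega_2^+$ (with $a_2>0$), not only when $|a+\xi|<1$. Indeed, the family $\{C_{mu+nv}\}$ is invariant under $x\mapsto -x$, so the bottom face always contains the reflection $-G_+$ of the top region $G_+$, which is present and of positive area in all three cases of the paper's description, and this reflected region is uncovered. Concretely, take $a=3i/10$, $b=3/10$, so $a\in\Omega_I^+$ and $|a+\xi|\approx 1.27>1$: points of the bottom face near the position $\xi$ are at distance $>1$ from every relevant disk center $-1,\,-a,\,1-a,\,-1-a,\,1-2a,\,-2a$, so your disks do not cover $D_0$; yet the lemma, and the case-I description $G(a,b)=G_+\sqcup G_0$ with no $\ominus$ arc, say there is \emph{no} bottom overlap. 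What you are missing is that $G(a,b)$ is not the entire uncovered portion of $\partial C_0$: the normalization in the paper (the choice $c_2<0$, equivalently the sheet of the boundary joined to the lateral region $G_0$ along the bottom-rim arc $\ominus$) retains only the component $G_-$ bounded by $C_{-v}$ and $C_{-u-v}$, while the symmetric pocket near $\xi$ belongs to the discarded sheet. Hence your claim that ``for $|a+\xi|\ge 1$ the four disks close around $D_0$'' is false, and no covering criterion of the form you propose can reproduce the stated equivalence; the subsequent ``degeneration at $|a+\xi|=1$'' and monotonicity steps are built on this false equivalence.

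For comparison, the paper's proof is a single observation along the corrected lines: the region $G_-$, when present, is cut out of the bottom face by exactly the two cylinders $C_{-v}$ and $C_{-u-v}$, whose cross-sections at $z=-1$ are the unit disks centered at $-a$ and $-a-1$; their curved faces meet along two vertical lines, hitting the plane $z=-1$ at $-a-\xi$ and $-a-\bar\xi$, and the corner on the correct ($c_2<0$) side is $-a-\xi$. The wedge $G_-$ pokes into the bottom face precisely when this corner lies inside the unit disk, i.e.\ $|a+\xi|<1$. Note that when $a_2>0$ one has $|a+\xi|^2-|a+\bar\xi|^2=2\sqrt{3}\,a_2>0$, so the paper's corner $-a-\xi$ is the \emph{farther} of the two lens tips from the origin: a symmetric covering test like yours, which cannot distinguish the two sheets, would naturally latch onto the wrong tip $-a-\bar\xi$ and produce the wrong threshold. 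If you want to salvage your approach, you must first restrict attention to the single boundary component selected by the normalization (as the paper does implicitly via the arcs $\ominus,\,-v,\,-v-u$ bounding $G_-$), after which the enumeration of translates you carried out correctly via Lemmas~\ref{lem:cylinder_pairs} and~\ref{lem:C(2v-u)} reduces the question to the one-line corner computation above.
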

\begin{proof}
Note that the point where the curved faces of $C_{-v}$ and $C_{-u-v}$ meet the plane $z=-1$ is described by the complex number $-a-\xi$.  
\end{proof}

\begin{figure}
\begin{center}
\begin{tikzpicture}[scale=4]
\draw (-1,0) -- (0,0);
\draw[dashed] (0,0) -- (1,0) node[below] {$1$};
\draw (.5,.866) arc (60:180:1);
\draw[dashed] (1,0) arc (0:60:1) node[above right] {$\xi$};
\draw (.5,.866) arc (120:180:1);
\draw (-.5,.866) node[above left] {$-\Bar{\xi}$} arc (180:240:1);
\draw (0,0) arc (60:120:1);
\draw (0,.5) node [above left] {$\Omega_{I}^+$};
\draw (-.5,.3) node[above left] {$\Omega_{II}^+$};
\draw (-.5,0) node [below] {$\Omega_{III}^+$};
\end{tikzpicture}
\end{center}
\caption{Three subregions of $\Omega_2^+$.}  
\label{fig:Omega_+}
\end{figure}
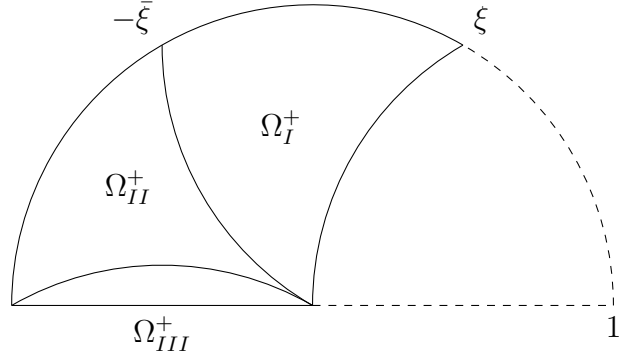

Let us divide $\Omega^+_2:=\{ a\in\Omega_2 : a_2\ge0 \}$ into the following subregions as depicted in Figure~\ref{fig:Omega_+}:
\begin{align*}
  \Omega_I^+ &= \{ a\in\Omega_2 : |a-\xi|<1\} \\
  \Omega_{II}^+ &= \{ a\in\Omega_2 : |a-\xi|\ge1, |a+\xi|\ge1 \} \\
  \Omega_{III}^+ &= \{ a\in\Omega_2 : |a+\xi|<1, \text{Im~}a\ge0 \} 
\end{align*}

Next, we describe $G(a,b)$ in each of the 3 main cases.  

CASE $a\in\Omega^+_I$:
$G(a,b)$ is bounded by the cylinders $C_{\pm u}$, $C_{\pm v}$ and  $C_{\pm(v-u)}$ and consists of two subregions $G_+\sqcup G_0$ joined along an arc, labeled $\oplus$, along the top rim of $C_0$.  The arcs along the boundary of each subregion in counter-clockwise order are labeled 
\begin{align*}
  G_+&: \oplus, u, v, v-u \\
  G_0\,&: \oplus, v-u, -u, -u, -v, u-v, u-v, u 
\end{align*}

CASE $a\in\mathrm{Int~}\Omega^+_{II}$:
$G(a,b)$ is bounded by the cylinders $C_{\pm u}$, $C_{\pm v}$ and $C_{\pm(v+u)}$ and consists of two subregions as in the previous case, with arcs along the boundary of the subregions labeled 
\begin{align*}
  G_+&: \oplus, u, v+u, v \\
  G_0\,&: \oplus, v, -u, -u, -v-u, -v, -v, u 
\end{align*}

CASE $a\in\Omega^+_{III}$:
$G(a,b)$ is bounded by the same cylinders in the previous case 
but now consists of three subregions $G_+\sqcup G_0\sqcup G_-$ bounded by the following arcs
\begin{align*}
  G_+&: \oplus, v+u, v \\
  G_0\,&: \oplus, v, v, -u, -v-u, -v-u, \ominus, -v, -v , u, v+u, v+u \\
  G_-&: \ominus, -v-u, -v
\end{align*}
where $\ominus$ is an arc along the bottom rim of $C_0$ joining $G_0$ and $G_-$.  

In the next section, we use the above description of $G(a,b)$ to arrive at an explicitly described region $\Tilde F(a,b)$ that is \emph{a priori} only known to contain $F(a,b)$.  We will verify that $\Tilde F(a,b)$ is a fundamental domain for the action of $\Z(1,b)+\Z(-a_1,1)$, from which it follows that $\Tilde F(a,b)$ provides an equivalent definition of $F(a,b)$.  In anticipation of this conclusion and since there is no further need to distinguish between to two sets, we shall drop the overscript when referring to $\Tilde F(a,b)$ in the following section.  

\subsection{Explicit description of $F(a,b)$}
Each vertical arc on $\partial F(a,b)$ is the projection (in the $y$-direction) of a linear segment on $\partial G(a,b)$ along which one of the cylinders in Lemma~\ref{lem:cylinder_pairs} is transverse to $C_0$.  Using complex notation for the projection (in the $z$-direction) of the linear segment, we arrive at 
the following table 
\begin{center}
\begin{tabular}{c|c|c}
 vertical arc & $z_0$ & $\kappa(z_0)$ \\
\hline
 $C_u$ & $\Bar\xi$ & $1/2$ \\
 $C_{-u}$ & $-\xi$ & $-1/2$ \\
 $C_v$ & $\chi_+(a)$ & $\kappa(a)$ \\
 $C_{-v}$ & $\chi_-(-a)$ & $\kappa(-\Bar{a})$ \\
 $C_{u-v}$ & $\chi_-(1-a)$ & $\kappa(1-\Bar{a})$ \\
 $C_{v-u}$ & $\chi_+(a-1)$ & $\kappa(a-1)$ \\
 $C_{u+v}$ & $\chi_-(a+1)$ & $\kappa(\Bar{a}+1)$ \\
 $C_{-u-v}$ & $\chi_+(-a-1)$ & $\kappa(-a-1)$ \\ 
\end{tabular}
\end{center}
where vertical arcs are labelled by the corresponding transverse cylinder and $\kappa$ is given by
$$\kappa(a) := \mathrm{Re~} \chi_+(a) = \frac{a_1}{2} - \frac{a_2}{|a|}\sqrt{1-\frac{|a|^2}{4}}.$$

In each case, $F(a,b)$ can be described as some larger rectangle 
  $[a,b]\times[c,d]$ with two or more ``corners" removed.  
These ``corners" will be described by the notation:
\begin{align*}
  NW(x,y) &:= [a,x[ ~\times~ ]y,d] & NE(x,y) &:=\; ]x,b] ~\times~ ]y,d] \\
  SW(x,y) &:= [a,x[ ~\times~ [c,y[ & SE(x,y) &:=\; ]x,b] ~\times~ [c,y[ 
\end{align*}

For $a\in\Omega^+_I$, $F(a,b)$ is given by 
  $$\left[\kappa(a-1),\frac12\right]\times[0,1] \quad\text{ minus }\quad
    SW(-1/2,1-b) \cup SE(\kappa(1-\Bar{a}),b)$$
while for $a\in\mathrm{Int~}\Omega^+_{II}$ it is given by 
  $$\left[~\kappa(a)~,\frac12\right]\times[-b,1] \quad\text{ minus }\quad
     SW(-1/2,1-b) \cup SE(~\kappa(-\Bar{a})~,0) $$
and for $a\in\Omega^+_{III}$ by 
\begin{align*}
  \left[-\frac12,\frac12\right]\times[-1,1] &\quad\text{ minus  }\quad \begin{tabular}{cc}
     $NW(~~~\kappa(a)~~,~~0~~)\;\;\cup$ & $NE(\kappa(1+\Bar{a}),b)\;\;\cup$ \\  
     $SW(\kappa(-a-1),-b)\;\;\cup$ & $SE(~\kappa(-\Bar{a})~,~0)\quad$ 
     \end{tabular} .
\end{align*}

\begin{figure}
\begin{center}
\begin{tikzpicture}[xscale=6, yscale=4,
axis/.style={very thick, ->},
dashed line/.style={dashed, thin}]
\draw[axis] (-1,0) -- (.8,0) node (x-axis) [right] {$c_1$};
\draw[axis] (0,-.2) -- (0,1.2) node (yaxis) [above] {$c_3$};
\def\kii{-.745} 
\def\kiv{.255}
\def\b{0.3}
\draw (-.5,0) node[below] {$-\frac12$};
\draw (.5,0) node[below] {$\frac12$};
\draw (0,1) node[below left] {$1$};
\draw[dashed line] (-.5,1-\b) -- (0,1-\b) node[below left] {$1-b$};
\draw[dashed line] (\kii,1-\b) -- (\kii,0) node [below] {$\kappa(a-1)$};
\draw (\kiv,0) node[below] {$\kappa(1-\Bar{a})$} -- (\kiv,\b) -- (.5,\b);
\draw (-.5,0)--(-.5,1-\b)--(\kii,1-\b)--(\kii,1)--(.5,1)--(.5,\b);
\draw[dashed] (\kiv,\b) -- (0,\b) node[left] {$b$};
\draw[dashed] (.5,0) -- (.5,\b);
\end{tikzpicture}
\end{center}
\caption{$F(a,b)$ in the case $a=\frac{3i}{10}\in\Omega_{I}^+$ and $b=.3$}
\end{figure}

\begin{figure}
\begin{center}
\begin{tikzpicture}[xscale=6, yscale=4,
axis/.style={very thick, ->},
dashed line/.style={dashed, thin}]
\draw[axis] (-1,0) -- (.8,0) node (x-axis) [right] {$c_1$};
\draw[axis] (0,-.6) -- (0,1.2) node (yaxis) [above] {$c_3$};
\def\kii{-.728} 
\def\kiv{.172}
\def\b{.3}
\draw (-.5,0) node[below right] {$-\frac12$};
\draw (.5,0) node[below] {$\frac12$};
\draw (0,1) node[below left] {$1$};
\draw[dashed line] (-.5,1-\b) -- (0,1-\b) node[below left] {$1-b$};
\draw (0,-\b) node[below left] {$-b$};
\draw[dashed line] (\kii,1-\b) -- (\kii,0) node [below] {$\kappa(a)$};
\draw (\kiv,0) node[above] {$\kappa(-\Bar{a})$};
\draw (-.5,-\b)--(\kiv,-\b)--(\kiv,0)--(.5,0)--(.5,1)--(\kii,1)--(\kii,1-\b)--(-.5,1-\b)--(-.5,-\b);
\end{tikzpicture}
\end{center}
\caption{$F(a,b)$ in the case $a=\frac{-9+3i}{10}\in\Omega_{II}^+$ and $b=.3$}
\end{figure}

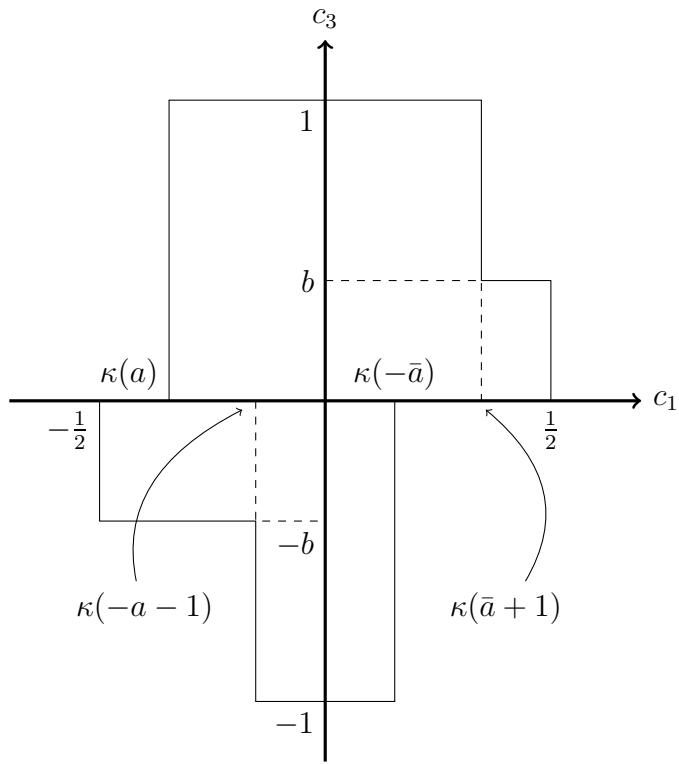
\begin{figure}
\begin{center}
\begin{tikzpicture}[xscale=6, yscale=4,
axis/.style={very thick, ->},
dashed line/.style={dashed, thin}]
\draw[axis] (-.7,0) -- (.7,0) node (x-axis) [right] {$c_1$};
\draw[axis] (0,-1.2) -- (0,1.2) node (yaxis) [above] {$c_3$};
\def\ki{.346}
\def\kii{-.346} 
\def\kiii{.-.154}
\def\kiv{.154}
\def\b{0.4}
\draw (\kii,0) node[above left] {$\kappa(a)$} --(\kii,1)--(\ki,1)--(\ki,\b)--(.5,\b)--(.5,0);
\draw (-.5,0)--(-.5,-\b)--(\kiii,-\b)--(\kiii,-1)--(\kiv,-1)--(\kiv,0) 
   node[above] {$\kappa(-\Bar{a})$};
\draw (-.5,0) node[below left] {$-\frac12$};
\draw (.5,0) node[below] {$\frac12$};
\draw (0,1) node[below left] {$1$};
\draw[dashed] (0,\b) node[left] {$b$} -- (\ki,\b) -- (\ki,0);
\draw[dashed] (\kiii,0) -- (\kiii,-\b) -- (0,-\b) node[below left] {$-b$};
\draw (0,-1) node[below left] {$-1$};
\path[->] (-.4,-.6) node[below]{$\kappa(-a-1)$} edge [bend left] (\kiii-.03,-.03);
\path[->] (.4,-.6) node[below]{$\kappa(\Bar{a}+1)$} edge [bend right] (\ki+.01,-.03);
\end{tikzpicture}
\end{center}
\caption{$F(a,b)$ in the case $a=\frac{-10+i}{20}\in\Omega_{III}^+$ and $b=.4$}
\end{figure}

It is easy to verify directly from the explicit description that $F(a,b)$ is a fundamental domain for the action of $\Z(1,b)+\Z(a_1,1)$.  From this, it follows that $\area F(a,b)=1-a_1b$.  This last claim can also be checked directly:  Indeed, in the case $a\in\Omega^+_{II}$ this boils down to the identity $\kappa(-\Bar{a}) - \kappa(a) = -a_1$, while for $a\in\Omega^+_I$ it is the same identity with $1-\Bar{a}$ instead of $a$.  The case $a\in\Omega^+_{III}$ follows by observing that the total width of NE and NW corners is $$\left(\kappa(a)+\frac12\right) + \left(\frac12 - \kappa(\Bar{a}+1) \right) = \kappa(-\Bar{a}) - \kappa(-a-1).$$

\section{Computation of $\mu_S(\Omega_7)$}
We have now established the closed form expression 
\begin{align*}
  \mu_S(\Omega_7) &= \frac{4\pi}{3} \int_{\Omega_2^+} da_1da_2 \int_0^1db \int_{F(a,b)}dc_1dc_3
    \int_{\sqrt{1-c_1^2}}^\infty\frac{dc_2}{((1-a_1b)c_2-a_2(bc_1-c_3))^3} \\
   &= \frac{2\pi}{3} \int_{\Omega_2^+} da_1da_2 \int_0^1db 
      \left(\frac{1}{1-a_1b} \int_{F(a,b)} \frac{dc_1dc_3}{\Xi^2}\right)
 \end{align*}
 where\footnote{Remark: it can be shown that $\Xi>1/24$.}  
 $$\Xi := (1-a_1b)\sqrt{1-c_1^2}-a_2(bc_1-c_3).$$  

Green's theorem applied to $\displaystyle \frac{1}{\Xi^2} = \frac{\partial Q}{\partial c_1} - \frac{\partial P}{\partial c_3}$ 
  with $Q=0$ and $\displaystyle P=\frac{1}{a_2\Xi}$ implies  
  $$\mu_S(\Omega_7) = \int_{\Omega^+_2} da_1da_2 \int_0^1db 
    \left(\frac{2\pi/3}{a_2(1-a_1b)} \int_{\partial F(a,b)} \frac{dc_1}{\Xi}\right).$$  
The substitution 
  $$c_1= \frac{2\tau}{1+\tau^2}\quad\text{ and }\quad
      dc_1= \frac{2(1-\tau^2)}{(1+\tau^2)^2} d\tau$$
and the observation 
  $$(1+\tau^2)\Xi = (1-a_1b)(1-\tau^2)-a_2(bc_1-c_3)(1+\tau^2)$$ 
readily leads to 
\begin{align*}
  \frac{dc_1}{\Xi} &= \frac{2(1-\tau^2)d\tau}{(1+\tau^2)
     (1-a_1b+a_2c_3 - 2a_2b\tau -(1-a_1b-a_2c_3)\tau^2)} \\
    &= \left(\frac{A+B\tau}{1+\tau^2} + \frac{C(a_2b+(1-a_1b-a_2c_3)\tau)+D}
     {1-a_1b+a_2c_3 - 2a_2b\tau -(1-a_1b-a_2c_3)\tau^2}\right) d\tau
\end{align*}
  where 
  $$B=C=\frac{a_2bA}{1-a_1b},\quad D=-a_2c_3A,\quad \text{ and }\quad 
    A=\frac{2(1-a_1b)}{(1-a_1b)^2+a_2^2b^2}.$$  
Since $A\tan^{-1}\tau+\frac{B}{2}\ln(1+\tau^2)$ depends on $c_1$ but not 
on $c_3$, the sum over the corners of $F(a,b)$ with alternating sign vanishes.  
The same reasoning applies to the first term of the decomposition 
  $$\frac{C}{2}\ln \left\{(1-a_1b+a_2c_3)-2a_2b\tau-(1-a_1b-a_2c_3)\tau^2\right\}
         =\frac{C}{2} \ln (1+\tau^2) + \frac{C}{2}\ln \Xi. $$ 
The second term also vanishes when summed over the corners of $F(a,b)$ with alternating sign.  To see this, we consider the pairing of the corners of $F(a,b)$ induced by the identification of vertical edges, noting that paired vertices are summed with opposite signs, and that $\Xi$ is constant on each pair.  It follows that 
  $$\mu_S(\Omega_7) = \int_{\Omega^+_2}da\int_0^1db 
     \frac{4\pi/3}{(1-a_1b)^2+a_2^2b^2}\int_{\partial F(a,b)}
     \frac{(-c_3)d\tau}{1-a_1b+a_2c_3-2a_2b\tau-(1-a_1b-a_2c_3)\tau^2}.$$  

Since we can factor $$(\phi_+-2a_2b\tau-\phi_-\tau^2)\phi_- = 
  (\sqrt{D} + a_2b + \phi_-\tau)(\sqrt{D} -a_2b -\phi_-\tau)$$  
  where $\phi_\pm=1-a_1b\pm a_2c_3$ and 
  $$D := (1-a_1b)^2 + a_2^2(b^2-c_3^2) 
      \ge |a|^2b^2-2a_1b+1-a_2^2 \ge a_2^2\left(\frac1{|a|^2}-1\right)>0$$ 
  and noting that $$\frac{d}{d\tau} \left(\ln \frac{\{\sqrt{(1-a_1b)^2+a_2(b^2-c_3^2)}+a_2b+
  (1-a_1b-a_2c_3)\tau\}^2}{1-a_1b+a_2c_3-2a_2b\tau-(1-a_1b-a_2c_3)\tau^2}\right)$$
$$ =\frac{2\sqrt{(1-a_1b)^2+a_2^2(b^2-c_3^2)}}{1-a_1b+a_2c_3-2a_2b\tau-(1-a_1b-a_2c_3)\tau^2}$$
  the rational function of $\tau$ can readily be integrated to yield
$$\mu_S(\Omega_7) =  \int_{\Omega^+_2}da\int_0^1db \sum_\nu 
     \frac{(-1)^\nu2\pi c_3/3\sqrt{D}}{(1-a_1b)^2+a_2^2b^2}
     \ln \left( \frac{\phi_+-2a_2b\tau-\phi_-\tau^2}{(\sqrt{D}+a_2b+\phi_-\tau)^2} \right)$$
  where the sum is over the corners of $F(a,b)$ labelled in such a way that the sign of, say $(1/2,b)$ in the case $a\in\Omega_I$ and $(1/2,0)$ in the other two cases, is given a plus sign.  

\textit{Remark.}  $\sqrt{D}+a_2b+\phi_-\tau>0$.  (\textit{Proof.} 
First note that $\phi_+-2a_2b\tau-\phi_-\tau^2>0$ because $\Xi>0$.  
In the case $\phi_->0$ the factors $\sqrt{D}\pm(a_2b+\phi_-\tau)$ have 
the same sign and cannot both be negative since their average is $\sqrt{D}>0$.  
In the case $\phi_-\le0$, it suffices to show that $|\phi_-|\le a_2b$ 
(since $D>0$ and $|\tau|\le1$) and this follows from 
$D=a_2^2b^2-|\phi_-|\phi_+>0$ and $|\phi_-|\le\phi_+$.)

When evaluating the integrand, it is convenient to pair up terms with the same $c_3$ that are joined by a horizontal segment on $\partial F(a,b)$.  
If $\tau_-<\tau_+$ distinguishes the endpoints, then the integrand of the triple integral 
is the expression 
$$
\frac{2\pi c_3/\sqrt{D}}{(1-a_1b)^2+a_2^2b^2} \ln \left\{
\left(\frac{\phi_+ -2a_2b\tau_+ -\phi_-\tau_+^2}
              {\phi_+ -2a_2b\tau_- -\phi_-\tau_-^2}\right)
\left(\frac{\sqrt{D}+a_2b+\phi_-\tau_-}
              {\sqrt{D}+a_2b+\phi_-\tau_+}\right)^2 \right\}
$$
summed over $c_3\in\{b,1-b,1\}$ with signs $\{+,+,-\}$ for $a\in \Omega_I^+$;
$c_3\in\{-b,1-b,1\}$ with signs $\{+,+,-\}$ for $a\in \Omega_{II}^+$; and 
$c_3\in\{-1,-b,b,1\}$ with signs $\{+,+,-,-\}$ for $a\in \Omega_{III}^+$.  

\begin{center}
\begin{tabular}{|c|c|c|c|c|}
\hline
 & $c_3$ & sign & $c_1(\tau_-)$ & $c_1(\tau_+)$ \\ \hline
I & $1$     & $-$ & $\kappa(a-1)$ & $.5$ \\ 
 & $1-b$  & $+$ & $\kappa(a-1)$ & $-.5$ \\
 &  $b$    & $+$ & $\kappa(1-\Bar{a})$ & $.5$ \\ \hline
II & $1$     & $-$ & $\kappa(a)$ & $.5$ \\ 
 & $1-b$  & $+$ & $\kappa(a)$ & $-.5$ \\
 & $-b$    & $+$ & $-.5$ & $\kappa(-\Bar{a})$ \\ \hline
III & $1$    & $-$ & $\kappa(a)$ & $\kappa(\Bar{a}+1)$ \\ 
 & $b$    & $-$ & $\kappa(\Bar{a}+1)$ & $.5$ \\
 & $-b$   & $+$ & $-.5$ & $\kappa(-a-1)$ \\
 & $-1$   & $+$ & $\kappa(-a-1)$ & $\kappa(-\Bar{a})$ \\ \hline
\end{tabular}
\end{center}
Note that $\tau$ as a function of $c_1$ is given by $\tau=\frac{1-\sqrt{1-c_1^2}}{c_1}$ apart from the removable singularity at $c_1=0$, so that 
$$\tau(a) = \frac{ 2a_1 - \sgn(a_1)|a|\sqrt{4-|a|^2}}{|a|^2+\sgn(a_1)2a_2}.$$
The expression that was fed into Octave is
\begin{align*}
3\mu_S(\Omega_7) &=  \int_{\Omega^+_2}da\int_0^1db \sum_{c_3}
     \frac{(-1)^\nu2\pi c_3}{((1-a_1b)^2+a_2^2b^2)\sqrt{D}}\ln \frac{1-x}{1+x} \\
     &=  \int_{\Omega^+_2}da\int_0^1db \sum_{c_3}
     \frac{(-1)^\nu2\pi c_3(\tau_+ -\tau_-)}{((1-a_1b)^2+a_2^2b^2)
     (\phi_+ - \tau_+\tau_-\phi_- - a_2b(\tau_+ +\tau_-))}
     \frac1x \ln \frac{1-x}{1+x}
\end{align*}     
where 
  $$x=\frac{(\tau_+ -\tau_-)\sqrt{D}} 
                           {\phi_+ \tau_+\tau_-\phi_- -a_2b(\tau_+ +\tau_-)}$$
and the value obtained by numerical integration is\footnote{In \cite{Xieu}, the factor of $3$ is missing.}
  $$3\mu_S(\Omega_7)=3.49277983865703...$$
which, using $\zeta(2)=1.2020569031...$ and $\zeta(3)=1.649340668...$, leads to $$ L_{2,1} = 1.13525697416719...$$
which is the value reported in \cite{Xieu}.

\end{document}